\documentclass[pmlr]{jmlr} 

\newcommand{\tagdsmode}{nonproceedings}

\makeatletter
\newcommand{\tagdssubmission}{submission}
\newcommand{\tagdsproceedings}{proceedings}

\ifx\tagdsmode\tagdsproceedings

\else\ifx\tagdsmode\tagdssubmission
  \def\ps@jmlrtps{%
    \let\@mkboth\@gobbletwo
    \def\@oddhead{\scriptsize Under Review at the 2nd Conference on Topology, Algebra, and Geometry in Data Science\hfill}%
    \let\@evenhead\@oddhead
    \def\@oddfoot{}%
    \let\@evenfoot\@oddfoot
  }

\else
  \def\ps@jmlrtps{%
    \let\@mkboth\@gobbletwo
    \def\@oddhead{}%
    \let\@evenhead\@oddhead
    \def\@oddfoot{}%
    \let\@evenfoot\@oddfoot
  }
  \renewcommand*{\@jmlrenddoc}{}
\fi\fi
\makeatother

\usepackage{longtable}

\usepackage{booktabs}
\usepackage{siunitx}

\theorembodyfont{\upshape}
\theoremheaderfont{\scshape}
\theorempostheader{:}
\theoremsep{\newline}

\jmlrvolume{334}
\jmlryear{2026}
\jmlrworkshop{Topology, Algebra, and Geometry in Data Science}

\title[Dimensionless Controls of Plasticity]{Dimensionless Controls of Plasticity Under Alternating Tasks: From Evolutionary Biology to Continual Learning}

\ifx\tagdsmode\tagdssubmission

\else

  \author{\Name{Owen Skriloff} \Email{oskriloff@uchicago.edu} \\
   \addr University of Chicago}

\fi

\begin{document}

\maketitle

\begin{abstract}
Plasticity under changing environments is central to both evolutionary biology and continual learning. Motivated by recent work on genotype--phenotype maps, we study a minimal deep-learning analogue where a network is trained alternately on two Boolean label sets, and ask which biological controls of plasticity survive the translation to gradient descent. Reinterpreting four proposed biological factors as quantities of training dynamics, we find the system reduces to two dimensionless controls: the \emph{task disagreement} $r$, the fraction of disagreeing labels, and the \emph{reach} $\eta T$, the product of learning rate and switching period. We derive two bounds on plasticity: $r$ alone fixes an extremal geometric floor on the utopia distance, while $r$ and $\eta T$ jointly bound forgetting. Across 9{,}720 trajectories, an ANOVA confirms that $r$, $\eta$, and $T$ dominate, while the effect of neutral-set size (emphasized in the biological setting) is negligible. The optimal reach itself follows an approximate inverse power law $\eta T^{*}\propto r^{-1.18}$, yielding a heuristic that sets the optimal reach $\eta T^*$ from the task disagreement alone. The analogy that survives is therefore dynamical rather than geometric, and our setting enables a view of plasticity through the lens of other driven systems in physics and engineering.
\end{abstract}
\begin{keywords}
Neural Network Plasticity, Continual Learning, Alternating Tasks, Dimensionless Analysis, Catastrophic Forgetting, Evolutionary Plasticity
\end{keywords}

\section{Introduction}

A longstanding question in the study of learning is how far universality extends across systems and substrates~\citep{learningmech26, repsalign24, platonic24, diffconsistency24, spinglass23}. Recent work argues that the empirical successes of deep learning will give rise to a scientific \emph{mechanics of learning}~\citep{learningmech26}: just as physical mechanics is organized around canonical solvable models (the harmonic oscillator, the hydrogen atom, the Ising model) and a few dimensionless quantities (the Reynolds number), the mechanics of learning aims for a compact catalog of laws governing training. This work contributes one entry to that catalog, a toy model for plasticity in deep neural networks.

We study \emph{plasticity}, a phenomenon investigated in parallel in continual learning and evolutionary biology, with mounting evidence suggesting shared structure across the two fields. We build on~\citet{PlasticGP}, who show that biological plasticity emerges when environments alternate and identify four controlling factors (neutral set size (NSS), Hamming distance, switching period, and mutation rate). We translate their setting to a deep network trained alternately on two label sets and ask which of these controls survive the passage to gradient descent.

In continuous time, the system is a switched first-order gradient flow~\citep{latz2021}, structurally identical to the periodically forced overdamped systems studied in statistical physics~\citep{joubaud2015, jung1993}, electrical engineering~\citep{oppenheim1997}, and chemical engineering~\citep{levenspiel1999}. We exploit this analogy to identify the dimensionless quantities advocated by~\citet{learningmech26}. By reading the learning rate as an inverse resistance and the switching period as a relaxation time, their product, a dimensionless reach in parameter space, represents a Damk\"ohler number~\citep{levenspiel1999} that sets how far the network relaxes within one phase. Our contributions are as follows:
\begin{itemize}
    \item \textbf{Two dimensionless controls.} We reduce the four biological factors of~\citet{PlasticGP} to two dimensionless controls of alternating training (task disagreement $r$ and reach $\eta T$), and show the remaining factors are subdominant. The optimal reach further obeys an approximate power law $\eta T^{*}\propto r^{-1.18}\approx 1/r$, so the optimal reach $\eta T^*$ can be set from $r$ alone.
    \item \textbf{Bounds on plasticity.} We prove that $r$ alone fixes a tight geometric floor on the utopia distance, $d_U \geq \sqrt{2}(1-2^{-r})$ (Theorem~\ref{thm:utop}), while $r$ and $\eta T$ jointly bound forgetting, $\Delta a_1 \lesssim e^{r\,\eta T\,M}-1$ (Theorem~\ref{thm:rob}).
    \item \textbf{Empirical validation and a divergence from biology.} Across 9{,}720 trajectories, an ANOVA shows that the \emph{static} landscape quantity of neutral set size, critical in biology, has negligible effect, while the \emph{dynamic} quantities of task disagreement, learning rate, and switching period dominate.
    \item \textbf{Toward a mechanics of learning.} Together, these results (disentangled hyperparameters, dimensionless controls, and scaling laws) provide a new solvable setting for plasticity that mirrors driven systems in physics and engineering, supporting the emerging mechanics of learning.
\end{itemize}

\section{Related Work}
\label{sec:related}

Neural networks arose from a combination of biomimicry, through artificial neuron models, and statistical physics, through spin-glass models~\citep{hopfield82}, spawning rich literatures linking machine learning to physics~\citep{statmech20} and neuroscience~\citep{neuro17}. Evolutionary biology and learning have likewise been connected through genetic algorithms~\citep{gareview21}, shared simplicity bias~\citep{Symmetry}, and readings of the central dogma as a generative model~\citep{geneticcode25}.

A strong connection has also formed between plasticity in evolutionary adaptation and in continual learning. The loss of plasticity, or a decline in a network's ability to learn new tasks, is well documented~\citep{plasticlost24, understandplastic23}. The field has largely targeted catastrophic forgetting under task change~\citep{forgetting17, surgury20, plasticlost24}, with recent work tying forgetting to the geometry of loss landscapes~\citep{connectforget21} and cyclical environmental change~\citep{abbas2023}.

These fields are further linked by geometric similarities between fitness landscapes (with genes as parameters) and loss landscapes of neural networks (with weights and biases as parameters). The equivalence of local minima~\citep{surflocalglobal14, localglobalmins17}, connectedness of low-loss regions~\citep{modeConnect2018, convexModeConn22, mechanisticModeConn23, lmc23}, and link between flatness and generalization~\citep{Hochreiter1997Flat, pmlr-v272-haddouche25a, pouplin2023curvaturelosslandscape} all recall the large, connected \emph{neutral sets} of genes that map to the same optimal phenotype in genotype--phenotype (GP) maps~\citep{ReviewGPmaps}. \citet{PlasticGP} show that plasticity emerges when environments alternate, and identify neutral set size as one of four controlling factors, along with Hamming distance, switching period, and mutation rate. We follow the precedent of taking inspiration from biology and test whether plasticity emerges in an analogous continual learning setting, studying how controls survive in an alternative substrate.

\section{Notation and Key Quantities}
\label{sec:notation}

\paragraph{Plasticity.} Plasticity has two related but distinct meanings. In continual learning it denotes a network's sustained ability to learn, whose decline is catastrophic forgetting. In the evolutionary setting of~\citet{PlasticGP} it denotes a single genotype performing well across multiple environments at once, or adapting quickly to changing conditions.

\paragraph{Accuracy proxies.} We train a network alternately on two label sets $y_1$ and $y_2$ over $N$ inputs, switching every $T$ steps. Writing $\ell_j(t)$ for the loss on task $j$ at step $t$, we form accuracy proxies $a_j(t)=e^{-\ell_j(t)}\in(0,1]$, so $a_j=1$ is perfect performance on task $j$. These proxies are bounded, monotone transformations of the loss, confine the trajectory to the unit square where the utopia geometry is defined, and echo the Boltzmann-probability reading of performance in~\citet{PlasticGP}. The main conclusions are not specific to this choice: other monotone maps to the unit square (a reciprocal transform, direct $0/1$ accuracy, and a Brier-style score) yield the same qualitative trends and rank-correlate strongly with $a_j$ (Appendix~\ref{app:accuracy}). The pair $(a_1(t),a_2(t))$ traces a trajectory in the unit square, and joint mastery corresponds to the \emph{utopia} point $(1,1)$ (Figure~\ref{fig:trajectory}).

\begin{figure}[htbp]
    \centering
    \includegraphics[width=0.83\textwidth]{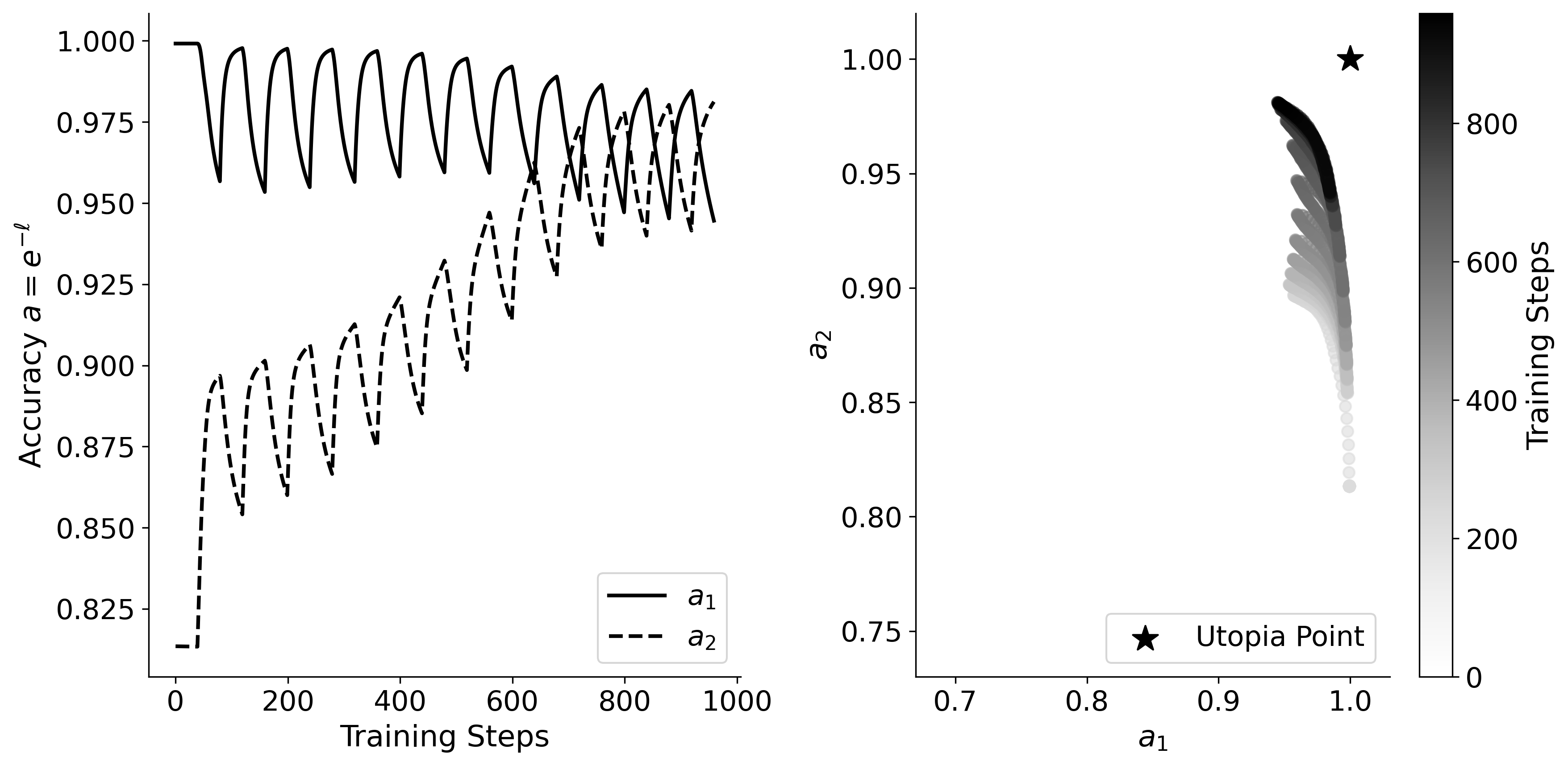}
    \caption{Example trajectory of alternating training on two functions. The left panel shows the accuracy proxies $a_1(t)$ and $a_2(t)$ over iterations. The right panel shows the corresponding phase plane trajectory and utopia point over iterations.}
    \label{fig:trajectory}
\end{figure}

\paragraph{Utopia distance.} The \emph{instantaneous utopia distance} $\delta_U(t) = \left\|(a_1(t),a_2(t))-(1,1)\right\|_2$ is the Euclidean distance from the current state to the utopia point. The \emph{utopia distance} $d_U$ captures how close the network comes to mastering both tasks at once: it is the time average of $\delta_U(t)$ at steady state, over the final two switching periods (ending at step $T_*$),
\[
    d_U = \frac{1}{2T}\sum_{t=T_*-2T}^{T_*-1}\delta_U(t),
\]
so $d_U$ is small only when both $a_1$ and $a_2$ stay near $1$ throughout the cycle, penalizing a trajectory that excels at one task at the expense of the other. A lower $d_U$ indicates higher plasticity.

\paragraph{Forgetting amplitude.} The \emph{forgetting amplitude} $\Delta a_1$ captures how much task-1 performance is lost when training switches to task 2: the peak-to-peak swing in $a_1$,
\[
    \Delta a_1 = \sup_{t,s\in[0,2T]}|a_1(t)-a_1(s)|,
\]
analogous to catastrophic forgetting~\citep{forgetting17,surgury20} and to the robustness measure of~\citet{PlasticGP}.

\paragraph{Dimensionless controls.} Two dimensionless quantities govern these outcomes. The \emph{task disagreement} $r=d_H/N\in[0,1]$ is the fraction of inputs on which $y_1$ and $y_2$ differ, with $d_H$ their Hamming distance (see~\citealp{hambook16}). The \emph{reach} $\eta T$ multiplies the learning rate $\eta$ ($\mathrm{step}^{-1}$) by the switching period $T$ (steps), measuring how far the network travels in parameter space during a single phase.

\paragraph{Analysis of variance.} To test which factors drive the empirical outcomes (Section~\ref{sec:results}), we use a Type II analysis of variance (ANOVA), reporting for each factor the effect size $\omega^2$, the fraction of outcome variance it explains. A large $\omega^2$ marks a dominant factor, letting us compare the influence of $r$, $\eta$, $T$, neutral set size, and complexity.

\section{Experimental Setup}

We approximate 7-bit Boolean functions ($N=2^7=128$) with a neural network (architecture and training details in Appendix~\ref{app:A}, Table~\ref{tab:hparams}). For each function $f$, the neutral set size $\mathbb{P}(f)=\int_\Theta \mathbb{I}\{\ell_f(\theta)=0\}\,p(\theta)\,d\theta$ is estimated by Monte Carlo sampling from a Gaussian prior $p(\theta)$~\citep{OcRazNN25, flatnessbad21}. Since simplicity bias couples Lempel--Ziv complexity $\tilde{K}(f)$ tightly to neutral set size~\citep{BoundP2018, OcRazNN25}, we decouple them by sampling function pairs within three complexity bins at varying probabilities. As all pairs have nonzero Hamming distance and cannot have overlapping neutral sets, a pair's total neutral set size is $\nu = \mathbb{P}(f_1) + \mathbb{P}(f_2)$.

We select 27 function pairs spanning low, medium, and high values of neutral set size, complexity, and Hamming distance $d_H$ in a $3\times3\times3$ grid (Appendix~\ref{app:A}, Table~\ref{tab:pairs}). After a pretraining phase on the first function, each experiment alternates SGD between the two label sets every $T$ steps until reaching steady state. Sweeping six learning rates and six switching periods (Table~\ref{tab:hparams}) with ten Monte Carlo trials per $(\eta, T)$ pair gives 9{,}720 trajectories, for each of which we compute $d_U$ and $\Delta a_1$.

\section{Theory}

All proofs are in Appendix~\ref{app:proofs}. We first upper bound the forgetting amplitude $\Delta a_1$, giving its scaling behavior:

\begin{theorem}
\label{thm:rob}
Suppose the trajectory has reached a period-$2T$ steady-state orbit, so that $a_1(0)=a_1(2T)$; that $a_1(t)$ monotonically decreases during the $y_2$ training phase; and that the loss is $L$-smooth and the network $f$ is $C$-smooth on the orbit. Let
\[
M=\sup_{\theta \in \mathrm{Orbit}}\| \nabla \ell_{1}(\theta)\|_{2} \max_{i\in\perp} \|J_i(\theta)\|_2 \le LC < \infty,
\]
where $\perp$ denotes the set of indices where the labels $y_1$ and $y_2$ differ, and $J_i(\theta)$ is the Jacobian of the network output with respect to parameters $\theta$ at index $i$. Here, $M$ is the maximum amplitude of the forcing term in the linearized response of $a_1$ to training on $y_2$.

Then $\Delta a_1\leq c_1$, where $c_1=a_1(0)(e^{r\eta T M + O(\eta^2 T)}-1)$. Further, for sufficiently small $\eta$, we have $c_1\approx a_1(0)(e^{r\eta T M}-1)$.
\end{theorem}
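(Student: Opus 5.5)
The plan is to track $\log a_1 = -\ell_1$ along the $y_2$ phase and bound its total change by a product of the per-step change and the number of steps $T$. Because the orbit is periodic with $a_1(0)=a_1(2T)$, and $a_1$ increases during the $y_1$ phase while decreasing monotonically during the $y_2$ phase, the peak-to-peak swing $\Delta a_1$ equals $a_1^{\max}-a_1^{\min}$. Here $a_1^{\max}$ is attained at the switch into the $y_2$ phase and $a_1^{\min}$ at its end. Re-indexing time so that the $y_2$ phase starts at $t=0$, the goal is to show $a_1(0)/a_1(T) \le e^{r\eta T M + O(\eta^2 T)}$. Then
\[
\Delta a_1 = a_1(0) - a_1(T) \le a_1(T)\bigl(e^{r\eta T M+O(\eta^2T)}-1\bigr) \le a_1(0)\bigl(e^{r\eta T M+O(\eta^2T)}-1\bigr).
\]
I will use $a_1(T)\le a_1(0)$, which follows from monotonicity. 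If the paper intends $a_1(0)$ to denote the start of the cycle, the same inequality holds, since $a_1$ is maximal there.

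The key step is a one-step Taylor expansion. A gradient step on task 2 gives $\theta_{t+1}=\theta_t-\eta\nabla\ell_2(\theta_t)$. By $L$-smoothness of $\ell_1$,
\[
\ell_1(\theta_{t+1})-\ell_1(\theta_t) = -\eta\langle\nabla\ell_1,\nabla\ell_2\rangle + O(\eta^2),
\]
with the $O(\eta^2)$ constant controlled by $L\|\nabla\ell_2\|^2$, which is bounded on the compact orbit. Next I decompose $\nabla\ell_2=\frac1N\sum_i \partial_{f}\ell(f_i,y_{2,i})J_i^\top$. On indices where $y_1=y_2$, the per-sample terms of $\ell_2$ coincide with those of $\ell_1$. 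Their inner product with $\nabla\ell_1$ is therefore the projection of $\nabla\ell_1$ onto its own agreeing-part component. Its sign is not automatically favorable, so I will argue it either helps, meaning it decreases $\ell_1$, or drop it after absorbing it into the bound. This is the delicate point. My first attempt is to bound only the disagreeing part, using the monotone-decrease hypothesis to say the net change is driven by the forcing term. On the disagreeing set $\perp$, which has $|\perp|=rN$ indices, Cauchy--Schwarz with a bounded per-sample loss derivative (normalized into $J_i$, or into $M$) gives
\[
|\langle\nabla\ell_1,\nabla\ell_2^{\perp}\rangle| \le \frac{1}{N}\sum_{i\in\perp}\|\nabla\ell_1\|\,\|J_i\| \le rM.
\]
Hence $\ell_1(\theta_{t+1})-\ell_1(\theta_t)\le \eta rM + O(\eta^2)$ per step.

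Summing over the $T$ steps of the $y_2$ phase gives $\ell_1(T)-\ell_1(0)\le r\eta TM+O(\eta^2T)$, so $a_1(0)/a_1(T)=e^{\ell_1(T)-\ell_1(0)}\le e^{r\eta TM+O(\eta^2T)}$, which yields $c_1$. For small $\eta$ with $\eta T$ fixed, the $O(\eta^2T)=O(\eta\cdot\eta T)$ term vanishes, giving $c_1\approx a_1(0)(e^{r\eta TM}-1)$. The bound $M\le LC$ follows because $\|\nabla\ell_1\|\le L$ and $\|J_i\|\le C$ on the orbit, reading the smoothness constants as Lipschitz bounds on a compact set. I expect the main obstacle to be handling the agreeing-index contribution and the per-sample loss-derivative factor so that exactly $r$, rather than $1$, multiplies $\eta TM$. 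If a clean sign argument fails, I will instead treat $M$ as absorbing the per-sample derivative magnitudes and argue that the agreeing terms are nonpositive to first order, because they equal $-\eta\|\nabla\ell_1^{=}\|^2$-type quantities after the cross terms are controlled.
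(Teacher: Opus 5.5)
Your architecture matches the paper's. You use periodicity and monotonicity to reduce $\Delta a_1$ to the rise of $\ell_1$ over the $y_2$ phase, expand each step to first order in $\eta$, bound $-\langle\nabla\ell_1,\nabla\ell_2\rangle$ by $rM$, and sum over $T$ steps. The gap is in that per-step bound, which carries the whole content of the theorem. Splitting $\nabla\ell_2=\nabla\ell_1^{=}+\nabla\ell_2^{\perp}$ by agreeing and disagreeing indices leaves the term $-\langle\nabla\ell_1,\nabla\ell_1^{=}\rangle=-\|\nabla\ell_1^{=}\|_2^2-\langle\nabla\ell_1^{\perp},\nabla\ell_1^{=}\rangle$. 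Its cross term has no sign: if $\nabla\ell_1^{\perp}=-2\nabla\ell_1^{=}$, the term equals $+\|\nabla\ell_1^{=}\|_2^2$. So it can be neither dropped nor ``absorbed'' for free. Neither of your remedies controls it. The monotone-decrease hypothesis says $\ell_1$ rises during the $y_2$ phase. To first order that is a \emph{lower} bound on $-\langle\nabla\ell_1,\nabla\ell_2\rangle$, and it says nothing about how the rise splits between agreeing and disagreeing indices. Your fallback, ``nonpositive once the cross terms are controlled,'' presupposes exactly the estimate that is missing.

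The missing idea is to compare $\nabla\ell_2$ with all of $\nabla\ell_1$, not with its agreeing part. For BCE, $\nabla\ell(y,\theta)=\frac1N\sum_i(p_i-y_i)J_i$, and $p_i=\sigma(f(x_i))$ does not depend on the labels. The gradient is therefore affine in $y$, and
\[
\nabla\ell_2=\nabla\ell_1-\frac1N\sum_{i\in\perp}(y_{2,i}-y_{1,i})J_i ,
\]
with coefficients exactly $\pm 1/N$. Dropping the nonpositive $-\|\nabla\ell_1\|_2^2$ and applying Cauchy--Schwarz gives
\[
-\langle\nabla\ell_1,\nabla\ell_2\rangle=-\|\nabla\ell_1\|_2^2+\Bigl\langle\nabla\ell_1,\tfrac1N\textstyle\sum_{i\in\perp}(y_{2,i}-y_{1,i})J_i\Bigr\rangle\le\frac{d_H}{N}\|\nabla\ell_1\|_2\max_{i\in\perp}\|J_i\|_2\le rM .
\]
In your notation this amounts to writing $\nabla\ell_1^{=}=\nabla\ell_1-\nabla\ell_1^{\perp}$. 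Then $\nabla\ell_1^{\perp}$ and $\nabla\ell_2^{\perp}$ combine into the label-difference sum. This also removes your worry about per-sample derivative factors: only label differences survive, so the prefactor is exactly $r$. With this in place, the rest of your argument goes through.

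Two small points:
\begin{itemize}
\item You assert that $a_1$ increases during the $y_1$ phase. That needs the descent lemma with $\eta<2/L$, as the paper uses.
\item The paper's $t=0$ is the start of the $y_1$ phase, where $a_1$ is \emph{minimal}, not maximal. So the paper's form is your intermediate bound $a_1(T)\bigl(e^{r\eta TM+O(\eta^2T)}-1\bigr)$, with $a_1(T)$ the minimum in your indexing, not your final one. Nothing is lost.
\end{itemize}
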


Two caveats temper the bound. First, the monotonic decrease of $a_1(t)$ during the $y_2$ phase is not guaranteed, though it holds in $97.5\%$ of phases within a test grid of 56{,}748 steady-state $y_2$ phases (Appendix~\ref{app:monotonicity}). The assumption is thus a close but not universal description of the dynamics. Second, $M\le LC$ is loose, so we use Theorem~\ref{thm:rob} for its scaling rather than as a quantitative bound, verified in Figure \ref{fig:bounds} (left) with the uniform proxy $M=100$.

Next, we derive a tight lower bound on $d_U$ depending only on $r$ (Figure \ref{fig:bounds}, right):

\begin{theorem}
\label{thm:utop}
If $0<r<1$, then $d_U\geq \sqrt{2}(1-2^{-r}).$
\end{theorem}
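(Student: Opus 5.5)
The plan is to prove a \emph{pointwise} lower bound $\delta_U(t)\geq\sqrt{2}(1-2^{-r})$ for every step $t$. Averaging over the final two switching periods then gives the same bound for $d_U$. I will assume the loss is mean binary cross-entropy over the $N$ inputs, with network output $p_i(\theta)\in(0,1)$ and losses in nats. This choice is what makes $2^{-r}$ appear, so I will check it against Appendix~\ref{app:A}.

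\textbf{Step 1 (loss coupling).} Drop the agreeing inputs, since their terms in $\ell_1$ and $\ell_2$ are nonnegative. On each disagreeing index $i\in\perp$, one task asks for label $1$ and the other for label $0$. Their two cross-entropy terms therefore sum to $-\log\bigl(p_i(1-p_i)\bigr)\geq 2\ln 2$, because $p(1-p)\leq 1/4$. Summing over the $d_H=rN$ disagreeing inputs and dividing by $N$ gives
\[
\ell_1(t)+\ell_2(t)\geq 2r\ln 2 .
\]
Since $a_j=e^{-\ell_j}$, this is equivalent to $a_1(t)a_2(t)\leq 4^{-r}=:c$. Every state therefore lies in the set $R=\{(x,y)\in(0,1]^2 : xy\leq c\}$.

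\textbf{Step 2 (geometric minimization).} I then compute the Euclidean distance from $(1,1)$ to $R$. Since $(1,1)\notin R$, the minimum is attained on the boundary arc $xy=c$ with $c\leq x,y\leq 1$. Setting up the Lagrange condition for minimizing $(1-x)^2+(1-y)^2$ subject to $xy=c$ gives $(1-x)x=(1-y)y$. Hence either $x=y$, or $x+y=1$.

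The branch $x+y=1$ forces $xy\leq 1/4$, which would require $c\leq 1/4$, i.e.\ $r\geq 1$. It is excluded by the hypothesis $r<1$; this is where the hypothesis is used. That leaves the symmetric critical point $(2^{-r},2^{-r})$, at distance $\sqrt{2}(1-2^{-r})$, and the arc endpoints $(1,c)$ and $(c,1)$, at distance $1-c=(1-s)(1+s)$ where $s=2^{-r}$.

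Comparing the two candidates, $(1-s)(1+s)\geq\sqrt{2}(1-s)$ holds iff $s\geq\sqrt{2}-1$. This is true because $r<1$ gives $s>1/2$. So the minimum over $R$ is $\sqrt{2}(1-2^{-r})$, which proves the pointwise bound.

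\textbf{Step 3 (tightness).} Equality is approached when the agreeing inputs are fit perfectly and $p_i=1/2$ on every disagreeing input. The bound is thus extremal, matching the ``geometric floor'' description.

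I expect the main obstacle to be Step 2, specifically ruling out the boundary and non-symmetric critical points. That step is exactly where $0<r<1$ enters. It is also where one must be careful: the convexity of the hyperbola alone does not give the symmetric point, since the endpoint $(1,c)$ beats it once $c$ is small enough. A secondary point is confirming the loss convention. If the paper averages the loss over a different normalization, the constant $2\ln2$ and hence the exponent $-r$ would change accordingly.
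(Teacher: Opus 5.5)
Your proposal is correct and follows the paper's proof. It uses the same coupling lemma $a_1a_2\le 4^{-r}$: your bound $-\log(p_i(1-p_i))\ge 2\ln 2$ is the paper's $\ln(2+2\cosh z)\ge\ln 4$ in logit form. It then runs the same Lagrange analysis on the hyperbola, excludes the $a_1+a_2=1$ branch via $r<1$, and averages the pointwise bound. Your explicit comparison with the arc endpoints $(1,c)$ and $(c,1)$ is slightly more careful than the paper's closing appeal to symmetry and $f''(\sqrt{c_2})>0$, which on its own only certifies a local minimum.
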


\begin{figure}[htbp]
    \centering
    \includegraphics[width=0.83\textwidth]{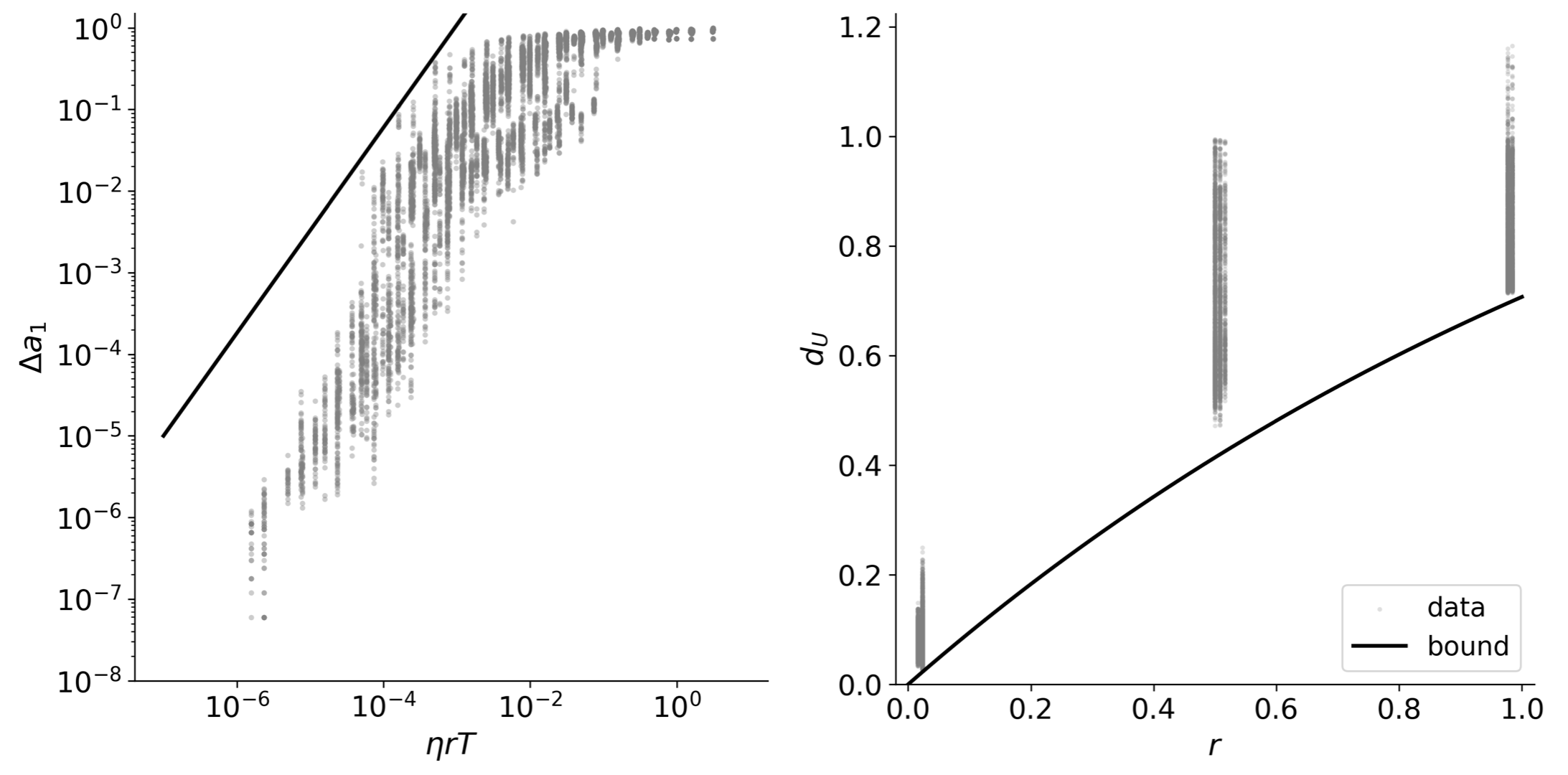}
    \caption{Empirical demonstration of the bounds of Theorems \ref{thm:rob} (left) and \ref{thm:utop} (right) across the 9{,}720 trajectories. For Theorem~\ref{thm:rob}, we use the uniform estimate $M=100$.}
    \label{fig:bounds}
\end{figure}

\section{Results}
\label{sec:results}

To test whether the theory's exclusive dependence on $\eta$, $r$, and $T$ is an artifact of the proof techniques or a true phenomenon, we ran a Type II ANOVA (Table \ref{tab:ANOVA}). It confirms both predicted dependencies: $\eta$ and $T$ have much larger effect sizes on $\Delta a_1$ than on $d_U$, while $r$ dominates $d_U$ far more than $\Delta a_1$. The bounds of Theorems \ref{thm:rob} and \ref{thm:utop} thus capture the dominant dependence of each outcome on the controls.

\begin{table}[h]
\caption{Type II ANOVA effect sizes for the two log-transformed plasticity outcomes. Models use complexity bin, neutral-set-size class, Hamming-distance class, learning rate, and switching period as categorical predictors. Reported values are $\omega^2$ effect sizes. Significance levels are denoted by $^{***}$ for $p<0.001$. Note that while $\tilde K$ and NSS yield statistically significant effects, their effect sizes are negligible compared to the other factors.}

\label{tab:ANOVA}
\centering
\begin{tabular}{lcc}
\toprule
\textbf{Factor} & $\log \Delta a_1$ & $\log d_U$ \\
\midrule
$\tilde{K}$ bin & $1{\times}10^{-4}$ & $3{\times}10^{-4\,***}$ \\
NSS ($\nu$) class & $3{\times}10^{-4\,***}$ & $1{\times}10^{-4\,***}$ \\
Hamming ($r$) class & $0.4966^{***}$ & $0.9422^{***}$ \\
$\eta$ & $0.2178^{***}$ & $0.0019^{***}$ \\
$T$ & $0.1348^{***}$ & $0.0030^{***}$ \\
Residual & $0.1503$ & $0.0525$ \\
\midrule
$R^2$ & $0.8497$ & $0.9475$ \\
\bottomrule
\end{tabular}
\end{table}

Notably, the ANOVA rejects the hypothesis that neutral set size $\nu$ and complexity $\tilde{K}$ strongly influence plasticity as they do in biology: despite statistical significance from the large sample, their effect sizes are negligible compared to the other factors. This points to a structural difference between evolution on the fitness landscape of genotype-phenotype maps and learning on the loss landscape of neural networks.

Despite the dominant effect size of $r$, the spread of $d_U$ above the theoretical floor within each $r$ bin (Figure \ref{fig:bounds}) shows that $r$ alone does not fully determine $d_U$. Where a trajectory actually sits above the extremal floor given by Theorem~\ref{thm:utop} is governed by the reach $\eta T$, as we show next. A heatmap of $d_U$ over the $(T,\eta)$ space (Figure \ref{fig:heatmap}, left) reveals a diagonal of optimal (lower) values along which $\eta T$ is approximately constant, suggesting that the product $\eta T$ governs optimal plasticity.

Plotting $d_U$ against $\eta T$ across the $r$ spectrum (Figure \ref{fig:heatmap}, center) confirms this. As $r$ increases, the minimum attainable utopia distance $d_U$ \emph{rises}, tracking the Theorem~\ref{thm:utop} floor, while the optimal $\eta T$ (the value minimizing $d_U$) \emph{falls}, and the optimum sharpens as the $r$-level curves grow more convex. Across 37 task pairs spanning 17 disagreement levels, this optimal reach follows an approximate power law (Figure \ref{fig:heatmap}, right), $\eta T^{*}\propto r^{-1.18}$ (pair-bootstrap 95\% CI $[-1.34,-1.01]$, $R^2=0.86$). Together, theory and experiment show a clean division of labor: task disagreement alone sets the geometric floor of $d_U$, the reach governs where the trajectory sits above it, and their product $r\eta T$ governs the scaling of the forgetting amplitude $\Delta a_1$.

\begin{figure}[ht]
    \centering
    \includegraphics[width=\textwidth]{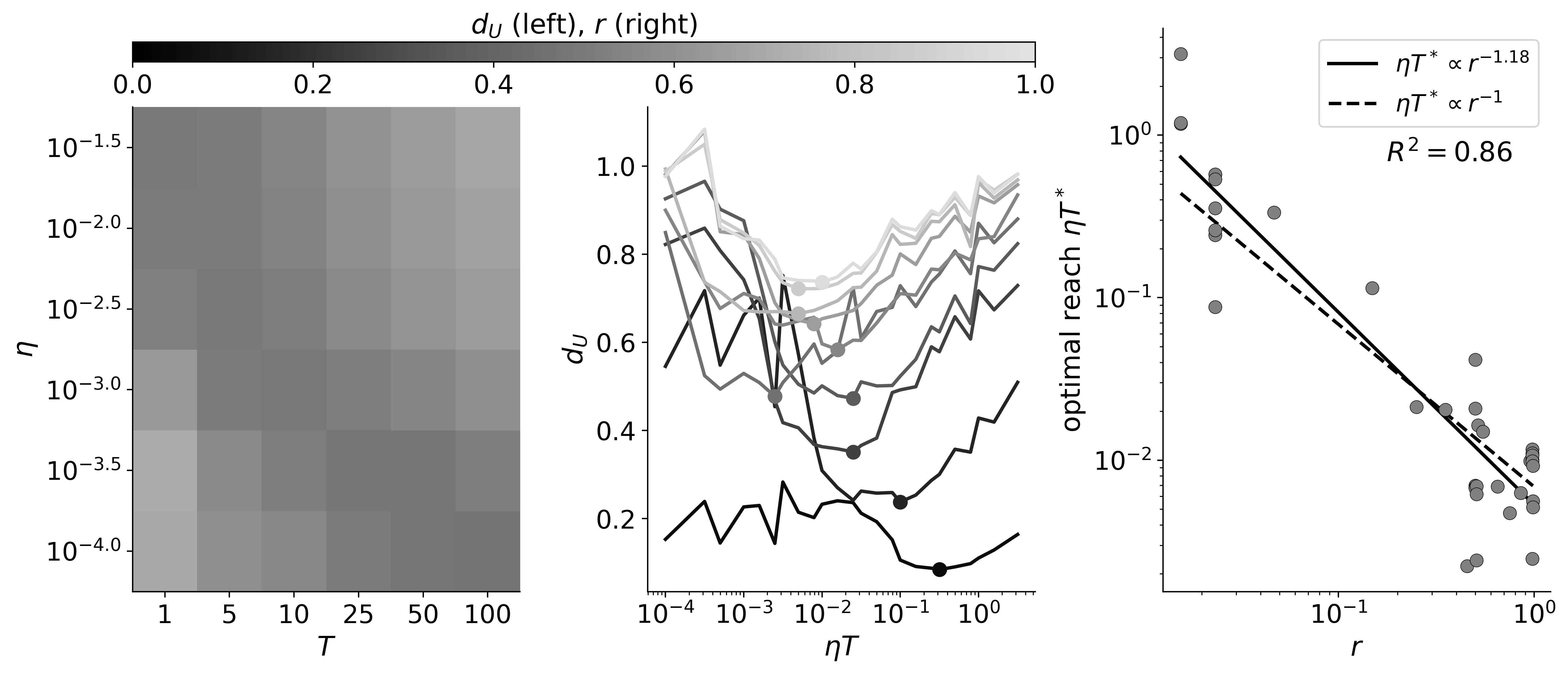}
    \caption{Heatmap of $d_U$ values across $(T,\eta)$ space with the most plastic trajectories (lowest $d_U$ values) occurring along a diagonal (constant $\eta T$) (left). Task disagreement $r$ level curves of the utopia distance $d_U$ versus nominal reach $\eta T$ (center) show that as $r$ grows the optimum rises and shifts to smaller $\eta T$ while curves become more convex. The optimal reach $\eta T^{*}$ versus $r$ over 37 task pairs (right) shows a power-law fit $\eta T^{*}\propto r^{-1.18}$ (95\% CI $[-1.34,-1.01]$, $R^2=0.86$).}
    \label{fig:heatmap}
\end{figure}

Beyond this primary grid, a set of smaller robustness studies (Appendix~\ref{app:robust}) indicates the two controls are not artifacts of the specific setup. The reach-governed picture persists under minibatch SGD, momentum, and Adam, and under held-out continuous-input tasks where performance is measured on unseen test data (mean train--test gap in $d_U$ below $0.01$). Varying network width and depth over a $16\times$ range in parameter count leaves $d_U$ essentially unchanged, giving direct empirical support for network-size independence. These small studies are reported as consistency checks rather than conclusive evidence of generality.

\section{Discussion}

\paragraph{Plasticity across substrates.} We translated the plasticity model of~\citet{PlasticGP} from evolutionary biology to deep learning, enabling a direct comparison across substrates. While Hamming distance, mutation (learning) rate, and switching period control plasticity in both settings, neutral set size plays a negligible role in the learning setting, suggesting a geometric divergence between the parameter spaces of genes and network weights despite the similarity of their fitness and loss landscapes.

The negligible role of neutral set size is the most surprising departure from biology. One plausible mechanism is that population dynamics deploy many trajectories that sample the neutral set volume, whereas a single SGD run occupies one point in it, governed by local forcing over global geometry. This predicts a larger role for neutral set size in ensemble methods that mimic population-based training as in \citet{ensembles20}, or under large explicit noise injection. We leave the testing of this prediction to future work.

\paragraph{Reach as a physical control parameter.} Prior work by~\citet{latz2021} casts the continuous-time limit of SGD as phenotypic evolution under randomly fluctuating environments. We replace the random fluctuation with a deterministic square wave, a canonical forcing signal in physics and engineering. The linearized response to such forcing scales like $\tanh(x)$~\citep{amplituderesp25}, which Theorem~\ref{thm:rob} bounds via $\tanh(x) \leq e^x - 1$, suggesting room to tighten in the linear regime.

Identifying the reach $\eta T$ as a control parameter draws a further analogy. The optimal learning rate $\eta$ is closely tied to the inverse local curvature (Lipschitz constant) of the loss landscape~\citep{relaxlr26, edgestability21}. Reading curvature as the system's ``resistance'' makes $\eta$ an inverse resistance, speeding return to minima after switching. $T$ then becomes the relaxation time, with their product $\eta T$ being the distance traveled in one phase, the reach. In chemical engineering this is exactly the first Damköhler number~\citep{levenspiel1999}, the ratio of residence to reaction time. The optimal reach spans roughly $0.005$--$0.02$ at high $r$ and is of order $0.1$--$1$ at low $r$, where the optimum is only weakly identified because the reach curve flattens. Thus, $\eta T \lesssim 1$ throughout and $\eta T \ll 1$ wherever the optimum is sharp, identifying the system as reaction-limited: for optimal plasticity, the phase must be short enough that relaxation cannot complete to a local minimum.

The convexity of the reach curve follows from the same picture: at low $r$ the system relaxes to a shared minimum and the curve is nearly flat, while at higher $r$ too little reach underfits both tasks and too much reach overfits the current task and forgets the other, both inflating $d_U$. The optimal reach falls as disagreement grows since the system becomes more sensitive to overfitting due to the greater distance between the two tasks' minima.

Further, the optimal reach falls with $r$ as an approximate power law $\eta T^{*}\propto r^{-1.18}$, close to the inverse law $\eta T^{*}\propto 1/r$ and suggesting a universal scaling for alternating tasks. This yields a design heuristic: given the many methods for estimating $\eta$ from curvature~\citep{relaxlr26, edgestability21}, the optimal switching period follows from the task disagreement alone, without probing the loss landscape or architecture. Combined with the two theorems, it collapses both plasticity outcomes to functions of $r$: the utopia floor $d_U \geq \sqrt{2}(1-2^{-r})$ and the forgetting amplitude $\Delta a_1 \lesssim e^{r\eta T^{*}M}-1 \approx e^{c M}-1$ for some constant $c$.

\paragraph{Connections to continual learning.} Our results also quantify continual-learning findings. Forgetting as curvature-weighted parameter displacement underlies the loss-landscape analysis of \citet{connectforget21}, who show empirically that batch size, learning rate, and flatness affect forgetting. Theorem \ref{thm:rob} gives a closed-form scaling in reach $\eta T$ and disagreement $r$. Elastic Weight Consolidation (EWC; \citealp{forgetting17}) adds a remembering term penalizing changes to parameters important for the previous task, and Theorem~\ref{thm:rob} adapts directly to quantify the gain. Under a diagonal-Fisher approximation near an optimum, an EWC penalty of strength $\lambda$ replaces the binary cross entropy (BCE) bound's exponent $\eta T(rM - E)$ with $\eta T(rM - E(1+\lambda))$, where $E$ is the mean squared task-1 gradient norm over the phase, strictly lowering the forgetting bound (Corollary~\ref{cor:EWC}, Appendix~\ref{app:proofs}).

Continual Learning with Experience and Replay (CLEAR; \citealp{reaplay18}) interweaves data from both tasks, much like our task switching. Since the empirically optimal reach under $d_U$ is nonzero, effective plasticity on disagreeing tasks requires a finite amount of learning within each alternating phase, giving quantitative support for CLEAR. Our results further suggest that optimal hyperparameters for CLEAR, or for alternating environments used directly as a continual-learning method, could be estimated from theory alone using our hyperparameter heuristic when the tasks disagree.

\paragraph{Toward a mechanics of learning.} Taken together, these results add direct evidence to four of the five strands of the mechanics-of-learning program~\citep{learningmech26}: a solvable, idealized setting for plasticity, simple governing laws (Theorems \ref{thm:rob} and \ref{thm:utop}), a reduction of hyperparameters to two dimensionless controls, and a link between evolutionary biology, chemistry, and learning hinting at substrate universality.

\section{Conclusion and Future Work}

We translated a model of biological plasticity to deep networks trained on alternating Boolean tasks and found that plasticity under task switching is governed by two dimensionless controls: the task disagreement $r$ and the reach $\eta T$. We proved that $r$ alone fixes a tight geometric floor on the utopia distance, while $r$ and $\eta T$ jointly bound forgetting. An ANOVA confirmed that these controls dominate the empirical behavior while the static landscape quantities central to the biological setting do not. The analogy that survives the translation is therefore dynamical rather than geometric, placing single-trajectory learning alongside driven systems in physics and engineering.

Since any computational data can in principle be written as a Boolean function, the setting is a candidate minimal model. Finer sweeps of the reach's interaction with $r$ could sharpen the exponent, but we conjecture more strongly that setting the reach to the disagreement-matched value $\eta T \approx 1/r$ provides near-optimal conditions for continual learning on alternating tasks (phases short enough to avoid overfitting the current task, yet long enough to keep learning). Thus, hyperparameter constraints follow from $r$ alone, echoing the transfer of maximal-update parameterization ($\mu$P) scaling heuristics~\citep{muP2022}.

The disagreement $r=d_H/N$ itself invites generalization. Because $d_H$ is a metric on binary label vectors, the framework may be generalized using other dissimilarity measures. For BCE, information-theoretic divergences or distances are natural candidates. The discrete bound $a_1 a_2 \le 4^{-r}$ (Lemma~\ref{lem:constraint}, Appendix~\ref{app:proofs}) suggests one route toward extending the framework to soft labels and continuous outputs. We present one alternative continuous metric in Appendix~\ref{app:robust} where $r$ is replaced by the angle between dividing hyperplanes of two linear classifiers.

The switching period $T$ should be governed by the non-commutativity of the two gradient flows. The Lie bracket $[\nabla\ell_1,\nabla\ell_2]$~\citep{commute2025}, accumulated over one period, links the reach to $\Delta a_1$. Since this quantity vanishes when the flows commute, we recover the $r\to0$ limit of Theorem~\ref{thm:rob}, but further analysis could explicitly link the reach, forgetting amplitude, and the Lie bracket for general tasks. This would extend the framework to general loss functions and continuous outputs, and could provide a geometric interpretation of the reach as a curvature-weighted distance in parameter space. 

The framework may also extend to generalization error. Since forgetting is a performance loss from distribution shift between tasks \citep{doan2021forgetting}, $\Delta a_1$ and $d_U$ may measure robustness to distribution shift and noisy labels, underscored by the out-of-distribution forgetting continual learners show under small intra-class shifts \citep{guo2023oodf}.

\acks{Thank you to Abiy Tasissa for helpful discussions and feedback on the manuscript.}

\bibliography{main}

\appendix

\section{Computational Experiment Details}
\label{app:A}

\begin{table}[htbp]
\centering
\caption{Architecture and training hyperparameters.}
\label{tab:hparams}
\begin{tabular}{ll}
\toprule
\textbf{Setting} & \textbf{Value} \\
\midrule
Hidden layers & 10 \\
Layer width & 40 \\
Activation & Tanh \\
Initialization & i.i.d.\ $\mathcal{N}(0, 1/\text{width})$ \\
Loss function & Binary cross-entropy (BCE) \\
Optimizer & SGD (full batch) \\
Learning rates $\eta$ & $\{10^{-4},\,10^{-3.5},\,10^{-3},\,10^{-2.5},\,10^{-2},\,10^{-1.5}\}$ \\
Switching periods $T$ & $\{1,\,5,\,10,\,25,\,50,\,100\}$ \\
Max iterations & 2500 \\
Trials per $(\eta,T)$ & 10 \\
\bottomrule
\end{tabular}
\end{table}

\begin{table}[htbp]
  \centering
  \small
  \caption{Attributes of the 27 label pairs $(y_1, y_2)$ used in the computational experiments.}
  \label{tab:pairs}
  \begin{tabular}{llllll}
    \toprule
    \# & $\tilde{K}$ & $\mathbb{P}(y_1)$ & $\mathbb{P}(y_2)$ & $\nu$ & $d_H$ \\
    \midrule
     1 & 56.0 & $1.80\times10^{-5}$ & $6.00\times10^{-6}$ & $2.40\times10^{-5}$ &   3 \\
     2 & 56.0 & $1.00\times10^{-5}$ & $1.20\times10^{-5}$ & $2.20\times10^{-5}$ &  64 \\
     3 & 56.0 & $1.80\times10^{-5}$ & $6.00\times10^{-6}$ & $2.40\times10^{-5}$ & 125 \\
     4 & 56.0 & $2.20\times10^{-5}$ & $4.60\times10^{-5}$ & $6.80\times10^{-5}$ &   3 \\
     5 & 56.0 & $3.00\times10^{-5}$ & $3.40\times10^{-5}$ & $6.40\times10^{-5}$ &  65 \\
     6 & 56.0 & $4.60\times10^{-5}$ & $2.20\times10^{-5}$ & $6.80\times10^{-5}$ & 125 \\
     7 & 56.0 & $9.80\times10^{-5}$ & $2.36\times10^{-4}$ & $3.34\times10^{-4}$ &   2 \\
     8 & 56.0 & $5.80\times10^{-5}$ & $3.94\times10^{-4}$ & $4.52\times10^{-4}$ &  65 \\
     9 & 56.0 & $9.80\times10^{-5}$ & $2.36\times10^{-4}$ & $3.34\times10^{-4}$ & 126 \\
    \midrule
    10 & 59.5 & $6.00\times10^{-6}$ & $1.20\times10^{-5}$ & $1.80\times10^{-5}$ &   3 \\
    11 & 59.5 & $1.20\times10^{-5}$ & $6.00\times10^{-6}$ & $1.80\times10^{-5}$ &  64 \\
    12 & 59.5 & $1.20\times10^{-5}$ & $6.00\times10^{-6}$ & $1.80\times10^{-5}$ & 126 \\
    13 & 59.5 & $3.00\times10^{-5}$ & $2.00\times10^{-5}$ & $5.00\times10^{-5}$ &   2 \\
    14 & 59.5 & $5.00\times10^{-5}$ & $6.00\times10^{-6}$ & $5.60\times10^{-5}$ &  64 \\
    15 & 59.5 & $3.00\times10^{-5}$ & $2.00\times10^{-5}$ & $5.00\times10^{-5}$ & 125 \\
    16 & 59.5 & $6.00\times10^{-5}$ & $3.80\times10^{-4}$ & $4.40\times10^{-4}$ &   2 \\
    17 & 59.5 & $4.56\times10^{-4}$ & $6.00\times10^{-6}$ & $4.62\times10^{-4}$ &  66 \\
    18 & 59.5 & $4.56\times10^{-4}$ & $5.00\times10^{-5}$ & $5.06\times10^{-4}$ & 126 \\
    \midrule
    19 & 66.5 & $6.00\times10^{-6}$ & $8.00\times10^{-6}$ & $1.40\times10^{-5}$ &   3 \\
    20 & 66.5 & $8.00\times10^{-6}$ & $6.00\times10^{-6}$ & $1.40\times10^{-5}$ &  64 \\
    21 & 66.5 & $6.00\times10^{-6}$ & $8.00\times10^{-6}$ & $1.40\times10^{-5}$ & 125 \\
    22 & 66.5 & $1.20\times10^{-5}$ & $8.00\times10^{-6}$ & $2.00\times10^{-5}$ &   3 \\
    23 & 66.5 & $1.20\times10^{-5}$ & $6.00\times10^{-6}$ & $1.80\times10^{-5}$ &  64 \\
    24 & 66.5 & $1.20\times10^{-5}$ & $8.00\times10^{-6}$ & $2.00\times10^{-5}$ & 125 \\
    25 & 66.5 & $1.20\times10^{-5}$ & $1.30\times10^{-4}$ & $1.42\times10^{-4}$ &   3 \\
    26 & 66.5 & $1.30\times10^{-4}$ & $6.00\times10^{-6}$ & $1.36\times10^{-4}$ &  65 \\
    27 & 66.5 & $1.30\times10^{-4}$ & $1.20\times10^{-5}$ & $1.42\times10^{-4}$ & 125 \\
    \bottomrule
  \end{tabular}
\end{table}

\section{Proofs for Theoretical Results}
\label{app:proofs}

\subsection{Proof of Theorem~\ref{thm:rob}}

\begin{proof}
From $t=0$ to $t=T$, training proceeds by full-batch gradient descent on the $y_1$ labels. Since $\ell_1$ is $L$-smooth (an assumption of the theorem), the descent lemma guarantees that any step size $\eta < 2/L$ strictly decreases the loss at each iteration; hence $\ell_1(t)$ is monotonically decreasing and $a_1(t)=e^{-\ell_1(t)}$ is monotonically increasing on $[0,T]$. By the assumption of the theorem, from time $t=T$ to $t=2T$ we know $a_1(t)$ is monotonically decreasing. Further, by periodicity we know that $a_1(0)=a_1(2T)$. Thus, we can conclude that
\[
    \Delta a_1 = \max_{s,t \in [0,2T]}|a_1(t)-a_1(s)|=a_1(T)-a_1(0)
\]
Since the trajectory evolves by gradient descent with respect to the $y_1$ labels during the $[0,T]$ phase, we define the quantity $\varsigma$, the total change in loss over the phase, as
\[
\varsigma := \eta \sum_{t=0}^{T-1}\|\nabla\ell_1(t)\|_2^2 + O\left( \eta^2 \sum_{t=0}^{T-1}\|\nabla\ell_1(t)\|_2^2 \right)
\]
Alternatively, we can derive that $\ell_1(T)=\ell_1(0)-\varsigma$ and 
$\varsigma  =\ell_1(2T)-\ell_1(T)$ by periodicity. This allows us to write 
\begin{equation}
\label{eq1}
\Delta a_1=a_1(0)(e^\varsigma - 1).
\end{equation}

We proceed by upper bounding $\varsigma$. In this second phase where $a_1(t)$ is decreasing, we know that the increasing value of $\ell_1$ is governed by the gradient disagreement between labels $\langle \nabla\ell_1(t),\nabla \ell_2(t) \rangle$. Using a Taylor expansion, we obtain
\[
\varsigma=-\eta \sum_{t=T}^{2T-1}\langle \nabla\ell_1(t),\nabla \ell_2(t) \rangle + O\left( \eta^2 \sum_{t=T}^{2T-1}\|\nabla\ell_1(t)\|_2^2 \right).
\]
To bound $-\langle \nabla\ell_1(t),\nabla \ell_2(t) \rangle$, we first expand the gradient of the BCE loss giving
\[
    \nabla_\theta \ell(y,\theta)= \frac{1}{N}\sum_{i=1}^N(p_i-y_i)J_i,
\]
where $p_i=\sigma(f(x_i))$ and $J_i=\nabla_\theta f(x_i)$. Since $p$ is independent of $y$, subtracting the two gradients gives
\[
\nabla\ell_2(t)=\nabla\ell_1(t)-\frac{1}{N} \sum_{i=1}^N (y_{2,i}-y_{1,i})J_i.
\]
Substituting into the inner product, we find that
\begin{align}
-\langle\nabla_\theta\ell_1(t),\nabla_\theta\ell_2(t)\rangle
&= \left\langle\nabla\ell_1(t),\,\frac{1}{N}\sum_{i=1}^N(y_{2i}-y_{1i})J_i\right\rangle - \|\nabla \ell_1(t)\|_2^2\\
&\le \left\langle\nabla\ell_1(t),\,\frac{1}{N}\sum_{i=1}^N(y_{2i}-y_{1i})J_i\right\rangle\\
&\le \frac{d_H}{N}\|\nabla \ell_{1}(t)\|_{2} \max_{i\in\perp} \|J_i\|_2
\end{align}
where $\perp$ denotes the set where the labels disagree, i.e. where $y_{1,i}-y_{2,i} \ne 0$. The last line is derived by restricting the sum to the $d_H$ nonzero disagreement indices, applying Cauchy-Schwarz, and then a uniform Jacobian bound. 

Plugging in and summing gives $\varsigma \le r\eta T M + O(\eta^2T)$. Applying this bound to Equation~\ref{eq1} proves the theorem.
\end{proof}

\subsection{Proof of Theorem~\ref{thm:utop}}

To prove the lower bound, we begin with a lemma that provides a constraint for the optimization problem solved in the lower bound theorem that follows.

\begin{lemma}
\label{lem:constraint}
$a_1a_2 \leq c_2$ where $c_2=4^{-r}$.
\end{lemma}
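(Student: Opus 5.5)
We would prove Lemma~\ref{lem:constraint} by working input by input. With BCE, $\ell_j = -\frac{1}{N}\sum_{i=1}^N \bigl[y_{j,i}\log p_i + (1-y_{j,i})\log(1-p_i)\bigr]$, where $p_i=\sigma(f(x_i))$ is common to both tasks at a fixed parameter $\theta$. Then
\[
a_1a_2 = e^{-(\ell_1+\ell_2)} = \exp\Bigl(-\tfrac{1}{N}\sum_{i=1}^N \bigl(c_{1,i}+c_{2,i}\bigr)\Bigr),
\]
where $c_{j,i}$ is the per-input cross-entropy. So it suffices to lower bound $\ell_1+\ell_2$ by $r\log 4$.

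\textbf{Splitting by agreement.} We split the sum into agreeing and disagreeing inputs.

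On an agreeing index ($y_{1,i}=y_{2,i}$), both terms equal the same cross-entropy, which is nonnegative. We simply drop these terms, since they only enlarge $\ell_1+\ell_2$.

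On a disagreeing index $i\in\perp$, one label is $1$ and the other is $0$. The pair contributes $-\log p_i - \log(1-p_i) = -\log\bigl(p_i(1-p_i)\bigr)$. By AM--GM, $p_i(1-p_i)\le 1/4$, so each such index contributes at least $\log 4$.

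\textbf{Summing.} Summing over the $d_H$ disagreeing indices gives $\ell_1+\ell_2 \ge \frac{d_H}{N}\log 4 = r\log 4$. Hence $a_1a_2\le e^{-r\log 4}=4^{-r}$ at every parameter $\theta$, and therefore at every time $t$.

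\textbf{Main point and subtlety.} The only real step is the AM--GM bound on the disagreeing indices. The subtlety is that the bound must hold pointwise in $\theta$. This is what lets it transfer to every $t$, and then to the time average in $d_U$ when the lemma is later used in Theorem~\ref{thm:utop}. For that later use we would note that minimizing $\|(a_1,a_2)-(1,1)\|_2$ subject to $a_1a_2\le 4^{-r}$ is attained by symmetry at $a_1=a_2=2^{-r}$. This gives $\delta_U(t)\ge\sqrt2(1-2^{-r})$ for every $t$, and averaging over the final two periods preserves the bound.
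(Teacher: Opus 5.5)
Your proof is correct and matches the paper's argument: split the per-input BCE sum by label agreement, drop the nonnegative agreeing terms, and bound each disagreeing pair below by $\ln 4$ to get $\ell_1+\ell_2\ge r\ln 4$ pointwise in $\theta$. The only difference is cosmetic: the paper writes the disagreeing pair in logit form as $\ln(2+2\cosh z)\ge\ln 4$, and your $p_i(1-p_i)\le 1/4$ is the same inequality.
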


\begin{proof}
We can expand the term-wise loss functions as
\[
\ell_{1,i}+\ell_{2,i}= 2\ln (1+e^{z})-(y_{1,i}+y_{2,i})z,
\]
where $z$ is the neural network output. Then, we have two cases. If $y_{1,i}=y_{2,i}$, then we get $\ell_{1,i}+\ell_{2,i}\ge0$. If not, then $y_{1,i}+y_{2,i}=1$ and we have
\[
2\ln(1+e^{z})- z = \ln(2+2\cosh z ) \ge \ln 4.
\]
Now, notice that
\begin{align}
(y_{1,i}-y_{2,i})^2 &= 
\begin{cases}
    0, & \text{if}\quad y_{1,i}=y_{2,i} \\
    1,  & \text{else}\quad
\end{cases} \\
&= \mathbf{1}_{\{y_{1,i}\ne y_{2,i}\}}.
\end{align}
Next, we get the total loss by summing over the data points, which gives
\begin{align}
 \ell_1+\ell_2 &= \frac{1}{N}\sum_{i=1}^N \left( \ell_{1,i}+\ell_{2,i} \right)\\
&\geq \frac{1}{N}\sum_{i=1}^N \mathbf{1}_{\{y_{1,i}\ne y_{2,i}\}} \ln4\\
&=\frac{\ln4}{N}d_H.
\end{align}

Finally, expanding $a_1a_2$ gives $a_1a_2 = e^{-(\ell_1+\ell_2)} \le e^{-\frac{\ln4}{N}d_H}=4^{-\frac{d_H}{N}}$.
\end{proof}

Using the lemma, we can now prove the lower bound on the utopia distance.

\begin{proof}
By Lemma~\ref{lem:constraint}, the constraint $a_1a_2\le c_2$ holds pointwise along the orbit. We therefore bound $\delta_U(t)$ pointwise by minimizing the squared instantaneous distance over admissible states $\mathbf{a}=(a_1,a_2)$:
\begin{equation*}
\arg\min_\mathbf{a} \quad \|\mathbf{a}-\mathbf{1}\|_2^2 \quad \text{s.t.} \quad a_1a_2\le c_2\tag{$\star$}.
\end{equation*}
First, notice that $\|\mathbf{a}-\mathbf{1}\|_2^2$ decreases as $a_1$ or $a_2$ increase. Thus, the solution to ($\star$) will lie on the hyperbola $a_1a_2=c_2$. We proceed by using Lagrange multipliers:
\begin{align}
    \mathcal{L}(\mathbf a,\lambda)&=\|\mathbf a-\mathbf 1\|_2^2 -\lambda(a_1a_2-c_2) =(1-a_1)^2+(1-a_2)^2-\lambda(a_1a_2-c_2)\\
    \nabla_\mathbf{a}\mathcal{L} &= -2\begin{bmatrix}1-a_1\\1-a_2\end{bmatrix} -\lambda \begin{bmatrix}a_2\\a_1\end{bmatrix}=\begin{bmatrix}0\\0\end{bmatrix}.
\end{align}
Solving, we find that $\lambda=\frac{-2}{a_2}(1-a_1)=\frac{-2}{a_1}(1-a_2)$. Cross multiplying and expanding gives
\[
(a_1-a_2)(1-(a_1+a_2))=0 \quad \implies \quad a_1=a_2 \quad \text{or} \quad a_1+a_2=1.
\]
The first case gives $a_1=a_2=\sqrt{c_2}$, resulting in $\|\mathbf{a}-\mathbf{1}\|_2 = \sqrt2 (1-\sqrt{c_2})=\sqrt2 (1- 2^{-r})$. In the second case, Vieta's formulas give that $a_1$ and $a_2$ are the roots of $x^2-x+c_2$. Hence, $a_1,a_2=\frac{1\pm \sqrt{1-4c_2}}{2}$. But we know that $a_1,a_2\in\mathbb{R}$, so $c_2\le \frac{1}{4}$ which gives $4^{-r} \le 4^{-1}$ which never happens since $0<r<1$. Thus, case 1 is the only possibility.

On the hyperbola $a_1a_2=c_2$, with $a_1,a_2\in[0,1]$, we parameterize $a_2=c_2/a_1$ and then minimize $f(a_1) = (1-a_1)^2 + \left(1-c_2/a_1\right)^2$. By symmetry, the minimum is attained at $a_1=a_2=\sqrt{c_2}$ since $f''(\sqrt{c_2})>0$.

Thus, $\delta_U(t) \geq \sqrt{2}(1-\sqrt{c_2})=\sqrt{2}(1-2^{-r})$ pointwise, and averaging over the cycle gives $d_U \geq \sqrt{2}(1-2^{-r})$.
\end{proof}

\subsection{EWC Corollary}

\begin{corollary}
\label{cor:EWC}
Suppose the Fisher information matrix is diagonal, the orbit remains sufficiently near some $\theta_1^*\in \Theta_1$ for $\ell_1$ to admit a quadratic approximation, $\eta$ is small enough that second-order update terms may be neglected, and all assumptions of Theorem~\ref{thm:rob} hold. Let $E = \frac{1}{T}\sum_{t=T}^{2T-1} \|\nabla \ell_1(t)\|_2^2$. If both losses use BCE and $rM \ge E(1+\lambda)$, then
\[
\Delta a_1 \le a_1(0)\bigl(e^{\eta T(rM - E)}-1\bigr);
\]
if the loss for environment 2 is replaced with the EWC loss $\ell_2^{\mathrm{EWC}}$, then
\[
\Delta a_1 \le a_1(0)\bigl(e^{\eta T(rM - E(1+\lambda))}-1\bigr).
\]
\end{corollary}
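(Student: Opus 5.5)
We plan to rerun the proof of Theorem~\ref{thm:rob}, but this time keep the $-\|\nabla\ell_1(t)\|_2^2$ term that was dropped in the step from the first to the second line of the inner-product chain. The reduction to Equation~\eqref{eq1} goes through unchanged. It uses periodicity, monotone increase of $a_1$ on $[0,T]$ from the descent lemma, and the assumed monotone decrease on $[T,2T]$, and gives $\Delta a_1=a_1(0)(e^{\varsigma}-1)$ with $\varsigma=\ell_1(2T)-\ell_1(T)$. Since $\eta$ is small, we drop second-order terms and write
\[
\varsigma=-\eta\sum_{t=T}^{2T-1}\langle\nabla\ell_1(t),\nabla\ell_2(t)\rangle .
\]
Because $x\mapsto a_1(0)(e^{x}-1)$ is increasing, any upper bound on $\varsigma$ transfers directly to $\Delta a_1$.

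\textbf{BCE case.} Substitute $\nabla\ell_2=\nabla\ell_1-\frac1N\sum_i(y_{2,i}-y_{1,i})J_i$, as in the proof of Theorem~\ref{thm:rob}. Each summand then equals a forcing term minus $\|\nabla\ell_1(t)\|_2^2$. Bound the forcing term by $rM$ using Cauchy--Schwarz restricted to $\perp$, together with the definition of $M$. Summing over the $T$ steps of the second phase, the retained negative terms add up to exactly $\eta T E$ by the definition of $E$. This gives $\varsigma\le\eta T(rM-E)$ and hence the first display.

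\textbf{EWC case.} Define
\[
\ell_2^{\mathrm{EWC}}=\ell_2+\tfrac{\lambda}{2}\sum_k F_k(\theta_k-\theta^*_{1,k})^2,
\]
so its gradient is $\nabla\ell_2+\lambda F(\theta-\theta_1^*)$ with $F$ diagonal. The extra contribution to $\varsigma$ is
\[
-\eta\lambda\sum_t\langle\nabla\ell_1(t),F(\theta_t-\theta_1^*)\rangle .
\]
The key step is to use the quadratic approximation of $\ell_1$ near $\theta_1^*$. There $\nabla\ell_1(\theta)\approx H_1(\theta-\theta_1^*)$. Identifying the Hessian with the Fisher, which is standard for BCE near an optimum (Gauss--Newton/Fisher equivalence), gives $F(\theta-\theta_1^*)\approx\nabla\ell_1(\theta)$. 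The extra inner product therefore becomes $\lambda\|\nabla\ell_1(t)\|_2^2$. Summing over the phase adds $-\eta T\lambda E$, so $\varsigma\le\eta T(rM-E(1+\lambda))$. The same monotone map then yields the second display.

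\textbf{Role of $rM\ge E(1+\lambda)$ and the main obstacle.} This hypothesis keeps both exponents nonnegative, which is consistent with $\Delta a_1\ge0$ and with the assumed monotone decrease of $a_1$ in the second phase. We would note this explicitly; without it, the bound would be vacuous or contradictory. We expect the main obstacle to be justifying $F(\theta-\theta_1^*)\approx\nabla\ell_1$, that is, $F\approx H_1$ on the orbit. Our first attempt would be to argue that near a zero-residual BCE optimum the Hessian reduces to its Gauss--Newton part $\frac1N\sum_i p_i(1-p_i)J_iJ_i^\top$, which equals the empirical Fisher. We would then absorb the diagonal-approximation error and the off-optimum error into the neglected higher-order terms, which the corollary's hypotheses explicitly allow us to drop.
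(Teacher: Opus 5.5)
Your proposal is correct and follows the paper's proof essentially step for step: you keep the $-\|\nabla\ell_1\|_2^2$ term dropped in Theorem~\ref{thm:rob}, add the EWC gradient $\lambda F(\theta-\theta_1^*)$, and use the quadratic approximation with Hessian $\approx$ Fisher to turn the extra inner product into $\lambda\|\nabla\ell_1\|_2^2$, so the sums give $\eta T E$ and $\eta T\lambda E$ as in the paper. One small correction to your side remark on the obstacle: for BCE the Gauss--Newton term $\frac{1}{N}\sum_i p_i(1-p_i)J_iJ_i^\top$ equals the model (true) Fisher rather than the empirical Fisher, and a diagonal-approximation error is first order in $\theta-\theta_1^*$, not higher order, so it is disposed of by the corollary's diagonal-Fisher hypothesis, not by the small-$\eta$ truncation.
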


\begin{proof}
Recall that in proving Theorem~\ref{thm:rob}, we needed to upper bound the inner product of the two loss gradients. Replacing $\ell_2$ with $\ell_2^{\mathrm{EWC}}$ gives
\begin{align*}
-\langle \nabla \ell_1(\theta), \nabla \ell_2^{\mathrm{EWC}}(\theta) \rangle &= -\langle \nabla \ell_1(\theta), \nabla \ell_2(\theta) + \lambda F_1(\theta^*_{1}) (\theta - \theta^*_{1}) \rangle\\
&= -\langle \nabla \ell_1(\theta), \nabla \ell_2(\theta) \rangle - \lambda \langle \nabla \ell_1(\theta), F_1(\theta^*_{1}) (\theta - \theta^*_{1}) \rangle\\
&= \langle \nabla \ell_1(\theta), \frac{1}{N}\sum_{i=1}^N (y_{1,i}-y_{2,i}) J_i \rangle \\
&\qquad - \|\nabla \ell_1(\theta)\|_2^2 - \lambda \langle \nabla \ell_1(\theta), F_1(\theta^*_{1}) (\theta - \theta^*_{1}) \rangle.
\end{align*}
Because $\theta_1^*$ is a critical point of $\ell_1$, we have $\nabla \ell_1(\theta_1^*)=0$, so a first-order Taylor expansion of the \emph{gradient field} about $\theta_1^*$ gives
\[
\nabla \ell_1(\theta) = \nabla^2 \ell_1(\theta_1^*)(\theta - \theta_1^*) + O(\|\theta - \theta_1^*\|_2^2).
\]
With BCE loss the Hessian at the optimum is approximately the (diagonal) Fisher information matrix, $\nabla^2 \ell_1(\theta_1^*)\approx F_1(\theta_1^*)$, so $F_1(\theta^*_{1})(\theta - \theta^*_{1})\approx\nabla \ell_1(\theta)$ as full vectors, aligned in direction and approximately equal in magnitude, rather than merely comparable in norm. Hence $\langle \nabla \ell_1(\theta),\, F_1(\theta_1^*)(\theta - \theta_1^*)\rangle \approx \|\nabla \ell_1(\theta)\|_2^2 \ge 0$, so the EWC term strictly lowers the bound, and
\[
-\langle \nabla \ell_1(\theta), \nabla \ell_2^{\mathrm{EWC}}(\theta) \rangle \le rM - (1+\lambda) \|\nabla \ell_1(\theta)\|_2^2.
\]
If we let $E = \frac{1}{T}\sum_{t=T}^{2T-1} \|\nabla \ell_1(t)\|_2^2$, following the same steps as in Theorem~\ref{thm:rob} gives
\[
\Delta a_1 \le a_1(0)(e^{\eta T(rM - E(1+\lambda))}-1).
\]
The BCE result is recovered by setting $\lambda=0$. The condition $rM \ge E(1+\lambda)$ ensures that the exponent is nonnegative, so the bound is meaningful.
\end{proof}

\section{Choice of Accuracy Proxy}
\label{app:accuracy}

The utopia geometry requires a monotone map from loss to the unit square. Besides $a=e^{-\ell}$ we
compute three alternatives on every evaluation: a reciprocal transform $1/(1+\ell)$, direct $0/1$ accuracy, and
a Brier-style score $1-\overline{(p-y)^2}$. Across $7{,}212$ runs the four proxies rank-correlate strongly
(Table~\ref{tab:proxy}), and the optimal-reach trend of Section~\ref{sec:results} replicates for the exp and
reciprocal proxies (power-law $R^2\approx0.97$--$0.99$), confirming that the conclusions are not artifacts of
the specific $e^{-\ell}$ choice. The Theorem~\ref{thm:utop} floor is claimed only for $a=e^{-\ell}$.

\begin{table}[htbp]
\centering
\caption{Spearman rank correlations between the four performance proxies, computed per run
($n=7{,}212$; all $p<10^{-3}$).}
\label{tab:proxy}
\begin{tabular}{lcccc}
\toprule
& $e^{-\ell}$ & reciprocal & accuracy & Brier \\
\midrule
$e^{-\ell}$   & $1$      & $0.998$ & $0.823$ & $0.941$ \\
reciprocal    & $0.998$  & $1$     & $0.830$ & $0.943$ \\
accuracy      & $0.823$  & $0.830$ & $1$     & $0.939$ \\
Brier         & $0.941$  & $0.943$ & $0.939$ & $1$     \\
\bottomrule
\end{tabular}
\end{table}

\section{Theorem~\ref{thm:rob} Monotonicity Audit}
\label{app:monotonicity}

Theorem~\ref{thm:rob} assumes $a_1(t)$ decreases through the $y_2$ phase. We audited every steady-state
$y_2$ phase in the Boolean reach grid ($56{,}748$ phases). Using the endpoint-inclusive convention (the value just before the first $y_2$ update, followed by every post-update value), strict pointwise decrease held in $97.5\%$ of phases; the weaker endpoint condition---$a_1$ largest at the phase start and smallest at its end, which is what the bound actually needs---held in $98.8\%$; and the median normalized positive variation was
$0$. This variation---the total within-phase increase in $a_1$ divided by its total decrease, and zero for a strictly decreasing phase---is sharply concentrated at $0$ with a thin right tail: the non-monotone phases show only small, transient upticks in $a_1$ rather than sustained increases, which is why the endpoint condition holds even where strict pointwise monotonicity fails. Violations are mild and heterogeneous in $r$. The assumption is thus a close but not universal description of the dynamics.

\begin{figure}[htbp]
\centering
\includegraphics[width=0.62\textwidth]{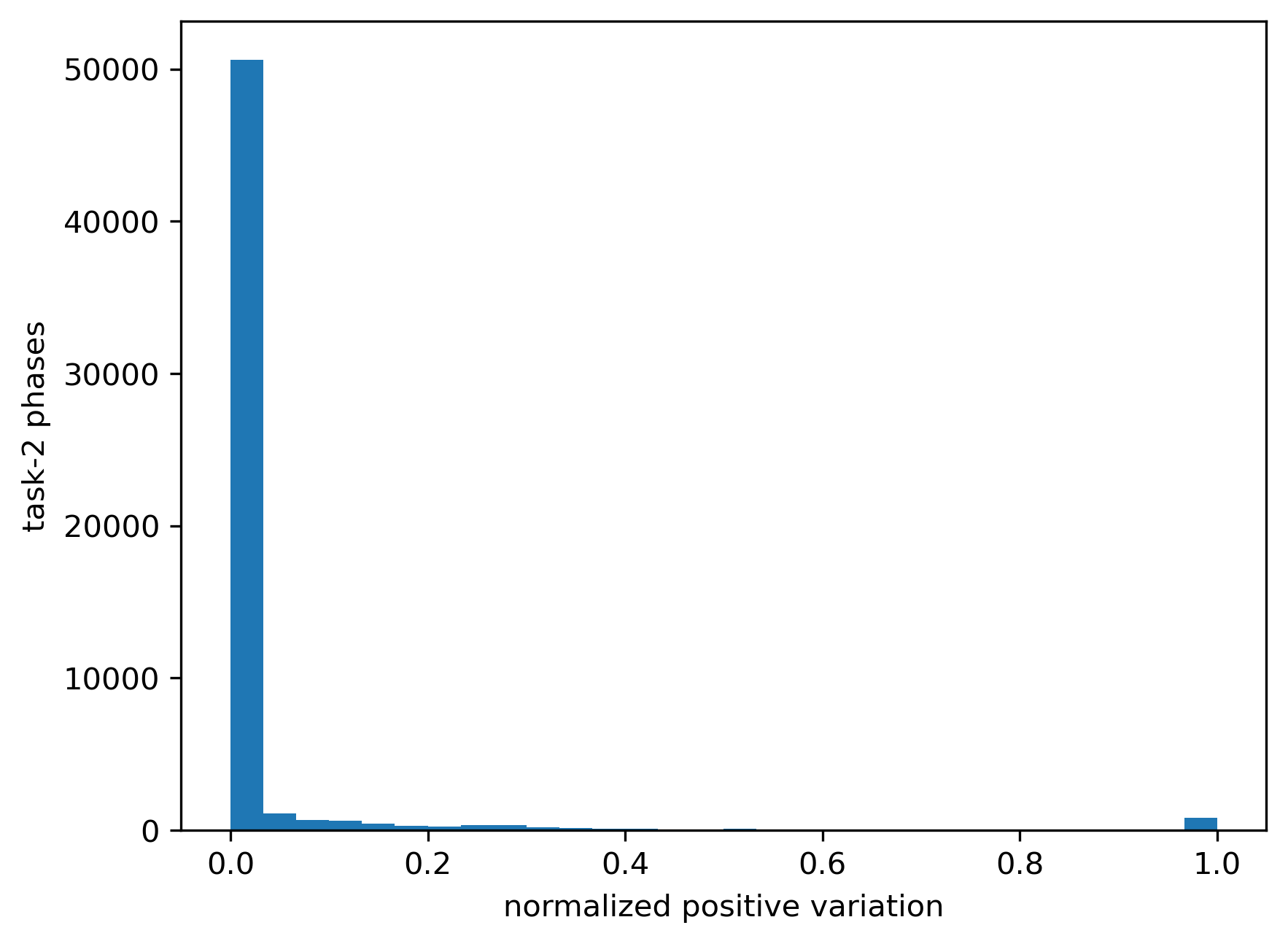}
\caption{Monotonicity audit of Theorem~\ref{thm:rob}. The $y_2$ phase is strictly decreasing in $97.5\%$ of phases, and the weaker endpoint condition holds in $98.8\%$. The median normalized positive variation is $0$, and the distribution is sharply concentrated at $0$ with a heterogeneous right tail.}
\label{fig:monotonicity}
\end{figure}

\section{Robustness Experiments}
\label{app:robust}

\subsection{Held-out generalization on continuous inputs}

To test performance on unseen inputs rather than the training truth table, we replace Boolean pairs with
linear-separator tasks on isotropic Gaussian inputs in $\mathbb{R}^{10}$. Two unit weight vectors $w_1,w_2$ are
placed at angle $\theta=\pi r$, giving labels $y_j=\mathbb{I}[X\!\cdot\! w_j>0]$ whose expected disagreement is
exactly $r$, a continuous analogue of $d_H/N$. We draw independent train ($4096$), validation ($2048$), and
test ($4096$) sets and evaluate on all splits. Over $454$ runs the train--test gap in $d_U$ is small (mean
$|{\Delta}|=0.007$, median $0.004$, max $0.064$), so the plasticity geometry is not a
training-set memorization artifact. This grid is small and some small-model pretraining did not fully converge,
so we treat it as a consistency check.

\subsection{Optimizer and architecture}

Table~\ref{tab:robust} sweeps the optimizer and the architecture while holding the reach sweep fixed.
The reach-governed picture persists under minibatch SGD, momentum, and Adam. Varying width and depth over a
$16\times$ range in parameter count ($3{,}641$ to $59{,}041$) leaves $d_U$ essentially unchanged, giving direct
empirical support for the network-size independence that our architecture-agnostic bounds assume but do not
prove. Figure~\ref{fig:archreach} shows the reach curves are preserved across architectures.

\begin{table}[htbp]
\centering
\caption{Mean steady-state $d_U$ (exp proxy) across optimizer and architecture variants, each with its
own reach sweep over $9$ task pairs and $5$ seeds.}
\label{tab:robust}
\begin{tabular}{llcc}
\toprule
\textbf{Study} & \textbf{Variant} & \textbf{Params} & \textbf{Mean $d_U$} \\
\midrule
Optimizer    & full-batch SGD    & $15{,}121$ & $0.565$ \\
             & minibatch-16      & $15{,}121$ & $0.539$ \\
             & minibatch-32      & $15{,}121$ & $0.548$ \\
             & momentum $0.9$    & $15{,}121$ & $0.487$ \\
             & Adam              & $15{,}121$ & $0.456$ \\
\midrule
Architecture & shallow ($3\times40$) & $3{,}641$  & $0.626$ \\
             & baseline ($10\times40$) & $15{,}121$ & $0.568$ \\
             & wide ($10\times80$) & $59{,}041$ & $0.581$ \\
\bottomrule
\end{tabular}
\end{table}

\begin{figure}[htbp]
\centering
\includegraphics[width=0.62\textwidth]{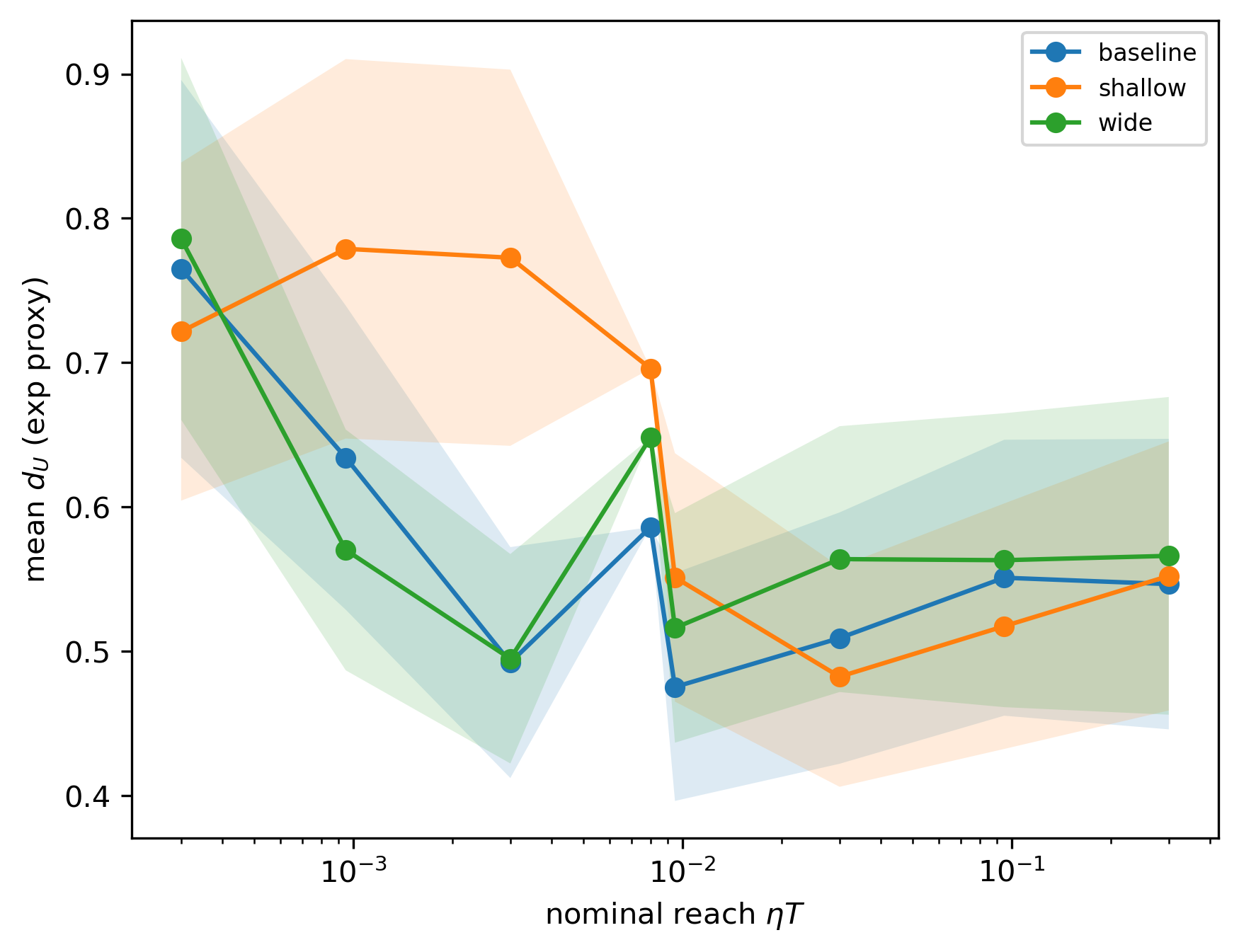}
\caption{$d_U$ versus nominal reach $\eta T$ for the three architectures; the optimum location and depth
are preserved across a $16\times$ parameter range.}
\label{fig:archreach}
\end{figure}

\end{document}